\def\so{\mathfrak{so}}
\def\SO{\mathrm{SO}}
\def\sp{\mathfrak{sp}}
\def\Sp{\mathrm{Sp}}
\def\su{\mathfrak{su}}
\def\u{\mathfrak{u}}
\def\hh{\mathfrak{h}}
\def\tt{\mathfrak{t}}
\def\mm{\mathfrak{m}}
\def\gg{\mathfrak{g}}
\def\SU{\mathrm{SU}}
\def\Cl{\mathrm{Cl}}
\def\spin{\mathfrak{spin}}
\def\Spin{\mathrm{Spin}}
\def\G{\mathrm{G}}
\def\U{\mathrm{U}}
\def\T{\mathrm{T}}
\def\f{\varphi}
\def \RM{\mathbb{R}}
\def \CM{\mathbb{C}}
\def \TM{\mathbb{T}}
\def \HM{\mathbb{H}}
\def \SM{\mathbb{S}}
\def\HP{{\mathbb{H}\mathrm{P}}}
\def\CP{{\mathbb{C}\mathrm{P}}}
\def\E{\mathrm{E}}
\def\A{\mathcal{A}}
\def\G{\mathrm{G}}
\def\1{\mathbf{1}}
\def\#{\sharp}
\def\ad{\mathrm{ad}}
\def\C{\mathbb{C}}
\def\End{\mathrm{End}\,}
\def\g{\mathfrak{g}}
\def\h{\mathfrak{h}}
\def\H{\mathrm{H}}
\def\id{\mathrm{id}}
\def\l{\lambda}
\def\m{\mathfrak{m}}
\def\R{\mathcal{R}}
\def\rk{\mathrm{rk}}
\def\S{\mathrm{\Sigma}}
\def\W{\mathcal{W}}
\def\<#1,#2>{\langle\,#1,\,#2\,\rangle}
\def\beq{\begin{equation}}
\def\eeq{\end{equation}}
\def\norm(#1){\|#1\|^2}
\def\rectangle(#1,#2)[#3,#4]#5{
 \multiput(#1,#2)(#3,0)2{\line(0,1){#4}}\multiput(#1,#2)(0,#4)2{\line(1,0){#3}}
 \put(#1,#2){\vbox to #4pt{\hbox to #3pt{\hfill}\vfill}}}
\def\recttext(#1,#2)[#3,#4]#5{\put(#1,#2)
 {\vbox to #4pt{\vfill\hbox to #3pt{\hss#5\hss}\vfill}}}
\def\tfrac#1#2{{\textstyle\frac{#1}{#2}}}
\newtheorem{Lemma}{Lemma}[section]
\newtheorem{Proposition}[Lemma]{Proposition}
\newtheorem{Theorem}[Lemma]{Theorem}
\theoremstyle{definition}
\newtheorem{Definition}[Lemma]{Definition}
\newtheorem{Remark}[Lemma]{Remark}
\title{Homogeneous almost quaternion-Hermitian manifolds}
\author{Andrei Moroianu, Mihaela Pilca, Uwe Semmelmann}
\thanks{This work was supported 
by the contract ANR-10-BLAN 0105 ``Aspects Conformes de la
G{\'e}om{\'e}trie''. The second-named author thanks
the Centre de Math{\'e}matiques de 
l'{\'E}cole Polytechnique for hospitality 
during the preparation of this work.}
\address{Andrei Moroianu \\ Universit\'e de Versailles-St Quentin \\
Laboratoire de Math\'ematiques \\ UMR 8100 du CNRS\\
45 avenue des \'Etats-Unis\\
78035 Versailles}
\email{andrei.moroianu@math.cnrs.fr}
\address{Mihaela Pilca\\Fakult\"at f\"ur Mathematik\\
Universit\"at Regensburg\\Universit\"atsstr. 31 
D-93040 Regensburg, Germany
\emph{and} 
Institute of Mathematics ``Simion Stoilow" of the Romanian Academy, 
21, Calea Grivitei Str.
010702-Bucharest, Romania}
\email{Mihaela.Pilca@mathematik.uni-regensburg.de}
\address{Uwe Semmelmann\\
Institut f\"ur Geometrie und Topologie \\
Fachbereich Mathematik\\
Universit{\"a}t Stuttgart\\
Pfaffenwaldring 57 \\
70569 Stuttgart, Germany
}
\email{uwe.semmelmann@mathematik.uni-stuttgart.de}
\begin{document}

\begin{abstract} An almost quaternion-Hermitian structure on a Riemannian manifold $(M^{4n},g)$ is
a reduction of the structure group of $M$ to $\Sp(n)\Sp(1)\subset \SO(4n)$. In this paper we show
that a compact simply connected homogeneous almost quaternion-Hermitian manifold of non-vanishing
Euler characteristic is either a Wolf space, or $\SM^2\times\SM^2$, or the complex quadric 
$\SO(7)/\U(3)$.

\medskip

\noindent 2010 {\it Mathematics Subject Classification}: Primary: 53C30, 53C35, 53C15. Secondary: 17B22

\smallskip

\noindent {\it Keywords}: Quaternion-Hermitian structures, homogeneous spaces, root systems, Clifford structures.

\end{abstract}
\maketitle

\section{Introduction}

The notion of (even) Clifford structures on Riemannian manifolds was introduced in \cite{ms}.
Roughly speaking, a rank $r$ (even) Clifford structure on $M$ is a rank $r$ Euclidean bundle whose
(even) Clifford algebra bundle acts on the tangent bundle of $M$. For $r=3$, an even Clifford
structure on $M$ is just an almost quaternionic structure, i.e. a rank 3 sub-bundle $Q$ of the
endomorphism bundle $\End(\T M)$ locally spanned by three endomorphisms $I,J,K$ satisfying the
quaternionic relations
$$I^2=J^2=K^2=-\id,\qquad IJ=K.$$
If moreover $Q\subset \End^-(\T M)$ (or, equivalently, if $I,J,K$ are $g$-orthogonal), the
structure $(M,g,Q)$ is called almost quaternion-Hermitian \cite{ca04,cs04,cs08,sw97}.

Homogeneous even Clifford structures on homogeneous compact manifolds of non-vanishing Euler
characteristic were studied in \cite{mp}, where it is established an upper bound for their rank, as
well as a description of the limiting cases. In this paper we consider the other extremal case,
namely even Clifford structures with the lowest possible (non-trivial) rank, which is $3$ and give
the complete classification of compact homogeneous almost quaternion-Hermitian manifolds $G/H$ with
non-vanishing Euler characteristic. This last assumption turns out to be crucial at several places
throughout the proof (see below). Without it, the classification is completely out of reach, but
there are lots of homogeneous examples constructed for instance by D. Joyce \cite{j1,j2} and
O. Maci\'a \cite{ma10}. 

Our classification result is the following:
\begin{Theorem}\label{expl}
A compact simply connected homogeneous manifold $M=G/H$ of non-vanishing Euler characteristic
carries a homogeneous almost quaternion-Hermitian structure if and only if it belongs to the following list:
\begin{itemize}
\item Wolf spaces $G/N$ where $G$ is any compact simple Lie group and $N$ is the normalizer of some
subgroup $\Sp(1)\subset G$ determined by a highest root of $G$, cf. \cite{w}.
\item $\SM^2\times\SM^2$.
\item $\SO(7)/\U(3)$.
\end{itemize}
\end{Theorem}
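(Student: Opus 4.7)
The plan is to translate the existence of a homogeneous almost quaternion-Hermitian structure into a constraint on the root/weight decomposition of $G/H$, and then exploit this constraint by a case analysis through the compact simple Lie algebras.

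\emph{Equal-rank reduction.} Since $\chi(M)\neq 0$, the rank of $H$ equals that of $G$, so I fix a common maximal torus $\t\subset\h\subset\g$. Let $\Phi\supset\Phi_H$ denote the respective root systems and $\Phi_M:=\Phi\setminus\Phi_H$, so that $\m^\C=\bigoplus_{\alpha\in\Phi_M}\g_\alpha$. A $G$-invariant almost quaternion-Hermitian structure on $G/H$ is the same as an $H$-equivariant lift of the isotropy representation through $\Sp(n)\Sp(1)\subset\SO(4n)$, equivalently a Lie algebra homomorphism $\rho\colon\h\to\sp(n)\oplus\sp(1)$ whose composition with the standard inclusion $\sp(n)\oplus\sp(1)\hookrightarrow\so(4n)$ recovers the isotropy representation $\chi$.

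\emph{Tensor-product structure.} From $\C^{4n}=\C^{2n}\otimes_\C\C^2$ as a module for $\sp(n)\oplus\sp(1)$, I obtain a splitting $\m^\C\cong E\otimes_\C F$ of complex $\h$-modules, where $E$ is of complex dimension $2n$, $F$ is of complex dimension $2$, and each carries an $\h$-invariant non-degenerate complex symplectic form. Diagonalising with respect to $\t$, the weights of $E$ split in sign pairs $\{\pm\xi_1,\ldots,\pm\xi_n\}$ and those of $F$ as $\{\pm\mu\}$. Since the weights of $\m^\C$ are exactly the elements of $\Phi_M$, each occurring with multiplicity one, this yields the central identity
\beq
\Phi_M\;=\;\{\,\varepsilon\xi_i+\varepsilon'\mu\,:\,1\le i\le n,\;\varepsilon,\varepsilon'\in\{\pm 1\}\,\}.
\eeq

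\emph{Case analysis.} I would first dispose of the non-simple case. Decomposing $\g=\g_1\oplus\cdots\oplus\g_k$ into simple ideals with mutually orthogonal root spaces, the identity above forces $\mu$ to have non-zero projection on at least two ideals; a direct verification then gives $k=2$ with both factors of rank one, which realises only $M=\SM^2\times\SM^2$. For $G$ simple, I split on whether $2\mu\in\Phi$. If $2\mu$ is a root, the Weyl group reduces me to $2\mu=\alpha_0$, the highest root of $\g$; the central identity then forces $\h$ to be the centralizer of the $\sp(1)$-triple associated to $\alpha_0$, which is precisely the isotropy algebra of a Wolf space. If $2\mu\notin\Phi$, an enumeration through the simple types --- using that $F$ must extend to a genuine two-dimensional symplectic $\h$-module --- eliminates every possibility except $(\g,\h)=(\so(7),\u(3))$, giving $\SO(7)/\U(3)$.

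The principal obstacle is this last enumeration: for each simple Lie algebra and each equal-rank subalgebra one must check both the combinatorial identity for $\Phi_M$ and the existence of the tensor-product lift, and the appearance of $\SO(7)/\U(3)$ as an extra case reflects the exceptional embedding $\U(3)\subset\SO(6)\subset\SO(7)$ together with a special splitting of the quadric tangent space. Sufficiency in each of the three cases is then verified by exhibiting the structure explicitly: the $\sp(1)_{\alpha_0}$-triple on Wolf spaces, the isomorphism $\SO(4)\cong\Sp(1)\Sp(1)$ on $\SM^2\times\SM^2$, and a direct construction for $\SO(7)/\U(3)$.
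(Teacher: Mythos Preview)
Your setup through the tensor splitting and the central identity $\Phi_M=\{\pm\xi_i\pm\mu\}$ is correct and coincides with the paper's Lemma~2.2 and Proposition~3.1 (there $\mu=\beta$ and $\xi_i=\alpha_i$). The treatment of the non-simple case is in the same spirit as the paper's Proposition~3.3, though the paper argues more carefully via the $H$-equivariance relation $\varphi(\rho_*(X)e_i)=[\ad_X,J_i]$ rather than purely from the weight identity.

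The substantive gap is in the simple case. The paper does \emph{not} split on whether $2\mu\in\Phi$. It first disposes of $G=\G_2$ by hand, then separates symmetric from non-symmetric quotients. In the symmetric case, a non-Wolf space would force $\h=\ker\rho_*\oplus\u(1)$, and this is excluded by an argument about central elements of $\ker\rho_*$ (invoking \cite{ff}). In the non-symmetric case the key observation --- which your outline lacks entirely --- is that $[\m,\m]\not\subset\h$ forces an identity
\[
(\pm\beta\pm\alpha_i)+(\pm\beta\pm\alpha_j)=\pm\beta\pm\alpha_k
\]
among actual roots of $\g$. Together with the length constraints (all roots of square length $1$ or $2$ once $\G_2$ is excluded), this reduces the problem to four concrete sign patterns a)--d), and a finite root-system computation eliminates all but one, pinning down $n=3$ and the $\SO(7)/\U(3)$ configuration.

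Your dichotomy on $2\mu$ does not provide a substitute for this mechanism. The implication ``$2\mu\in\Phi\Rightarrow$ Wolf'' is not established: one checks easily that $2\mu\in\Phi$ forces $2\mu\in\Phi_H$, but this only yields an $\sp(1)$-\emph{subalgebra} of $\h$, not a direct summand, and nothing yet forces $\h$ to equal the full centralizer of that $\sp(1)$; moreover $2\mu$ might a priori be a short root (indeed, in the paper's Case~1 of d) one derives $|2\beta|^2=1$ before reaching a contradiction), so the Weyl-group reduction to the highest root is not automatic. On the other side, for $2\mu\notin\Phi$ you propose an ``enumeration through the simple types and equal-rank subalgebras'', but with no bound on $n$ this is an infinite search; the paper's $[\m,\m]\cap\m\neq0$ argument is precisely what makes the problem finite. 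As written, the proposal isolates the right invariant but is missing the idea that actually drives the classification.
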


Let us first give some comments on the above list. The Wolf spaces are quaternion-K\"ahler
manifolds \cite{w}, so they admit not only a topological but even a {\em holonomy} reduction to
$\Sp(n)\Sp(1)$. In dimension 4, every orientable manifold is almost quaternion-Hermitian since
$\Sp(1)\Sp(1)=\SO(4)$. In this dimension there exist (up to homothety) only two compact simply
connected homogeneous manifolds with non-vanishing Euler characteristic: $\SM^2\times\SM^2$ and
$\SM^4$. The latter is already a Wolf space since $\SM^4=\HP^1$, this is why in dimension 4, the
only extra space in the list is $\SM^2\times\SM^2$. Finally, the complex quadric
$\SO(7)/\U(3)\subset \CP^7$, which incidentally is also the twistor space of $\SM^6$, carries a
1-parameter family of $\Sp(3)\U(1)$ structures with fixed volume. Motivated by our present
classification, F. Mart\'\i n Cabrera and A. Swann \cite{cs12} are currently investigating the
quaternion Hermitian type of this family.

The outline of the proof of Theorem \ref{expl} is as follows: The first step is to show (in
Proposition \ref{simple}) that $G$ has to be a simple Lie group, unless $M=\SM^2\times\SM^2$. The
condition $\chi(M)\ne 0$ (which is equivalent to $\rk(H)=\rk(G)$) is used here in order to ensure
that every subgroup of maximal rank of a product $G_1\times G_2$ is itself a product. 
The next step is to rule out the case $G=\G_2$ which is the only simple group for which the ratio
between the length of the long and short roots is $\sqrt3$. 
Once this is done, we can thus assume that either all roots of $G$ have the same length, or the
ratio between the length of the long and short roots is $\sqrt2$. We further show that if $G/H$ is
symmetric, then $H$ has an $\Sp(1)$-factor, so $M$ is a Wolf space.

Now, since $\rk(H)=\rk(G)$, the weights of the (complexified) isotropy representation $\m^\CM$ can
be identified with a subset of the root system of $G$. We show that the existence of a homogeneous
almost quaternion-Hermitian structure on $G/H$ implies that the set of weights $\W(\m^\CM)$ can be
split into two distinct subsets, one of which is obtained from the other by a translation
(Proposition \ref{wisotrop} below).
Moreover, if $G/H$ is not symmetric, then $[\m,\m]\cap\m\ne0$, so
$(\W(\m^\CM)+\W(\m^\CM))\cap\W(\m^\CM)\ne\emptyset$. Putting all this information together we are
then able to show, using the properties of root systems, that there is one single isotropy weight
system satisfying these conditions, namely the isotropy representation of $\SO(7)/\U(3)$, whose
restriction to $\SU(3)$ is isomorphic to $\CM^3\oplus(\CM^3)^*$ and is therefore quaternionic.

\section{Preliminaries}

Let $M=G/H$ be a homogeneous space. Throughout this paper we make the following assumptions:
\begin{itemize}
\item $M$ is compact (and thus $G$ and $H$ are compact, too).
\item The infinitesimal isotropy representation is faithful (this is always the case after taking an
appropriate quotient of $G$)
\item $M$ has non-vanishing Euler characteristic: $\chi(M)\ne 0$, or, equivalently,
$\rk(H)=\rk(G)$. 
\item $M$ is simply connected. An easy argument using the exact homotopy sequence shows that by
changing the representation of $M$ as homogeneous space if necessary, one can assume that $G$ is
simply connected and $H$ is connected (see \cite{au} for example).
\end{itemize}

Denote by $\hh$ and $\gg$ the Lie 
algebras of $H$ and $G$ and by $\m$ the orthogonal complement of $\hh$ in $\gg$ 
with respect to some $\ad_\gg$-invariant scalar product on $\gg$. The restriction 
to $\mm$ of this scalar product defines a homogeneous Riemannian metric $g$ on $M$.

An almost quaternion-Hermitian structure on a Riemannian manifold $(M,g)$ is a three-dimensional
sub-bundle of the 
bundle of skew-symmetric endomorphisms $\mathrm{End}^{-}(\T M)$, which is locally spanned by three
endomorphisms satisfying the quaternion relations \cite{ca04, sw97}. In the case where $M=G/H$ is
homogeneous, such a structure is called homogeneous if this three-dimensional sub-bundle is defined
by a three-dimensional $H$-invariant summand of  the second exterior power of the isotropy
representation $\Lambda^2\m=\mathrm{End}^{-}(\mathfrak{m})$. For our purposes, we give the following
equivalent definition, which corresponds to the fact that an almost quaternion-Hermitian structure
is just a rank 3 even Clifford structure (cf. \cite{mp, ms}):

\begin{Definition}\label{defi}
A {\em homogeneous almost quaternion-Hermitian}  structure on the Riemannian homogeneous
space $(G/H,g)$ is an orthogonal representation $\rho:H\to\SO(3)$ and an
$H$-equivariant Lie algebra morphism $\varphi: \so(3)\to
\mathrm{End}^{-}(\mathfrak{m})$ extending to an algebra representation
of the even real Clifford algebra $\Cl^0_3$ on $\mathfrak{m}$.
\end{Definition}

The $H$-equivariance of the morphism $\varphi: \so(3)\to
\mathrm{End}^{-}(\mathfrak{m})$ is with respect to the following actions of  $H$: the action on $\so(3)$ 
is given by the composition of  the adjoint representation of $\SO(3)$
with $\rho$, and the action on $\mathrm{End}^{-}(\mathfrak{m})$ is the one induced 
by the isotropy representation $\iota$ of $H$. Since $\f$ extends to a representation of $\Cl^0_3\simeq \HM$ on $\m$,
the above definition readily implies the following result (see also \cite[Lemma 3.2]{mp} or \cite{sa82}):

\begin{Lemma}\label{iso}
The complexified isotropy representation $\iota_*$ on $\m^\CM$ is isomorphic to
the tensor product $\m^\CM=\H\otimes_\CM\E$, where $\H$ is defined by the composition $\mu:=\xi\circ\rho_*$ of $\rho_*$ with the spin representation $\xi$ of $\so(3)=\spin(3)=\sp(1)$ on $\HM$, and $\E$ is defined by the composition $\lambda:=\pi\circ\iota_*$ of the isotropy representation with the projection of $\h$ to the kernel of $\rho_*$.
\end{Lemma}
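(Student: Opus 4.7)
The plan is to use the Clifford extension of $\varphi$ to produce the tensor decomposition of $\m^\CM$ and then invoke $H$-equivariance to split the $\h$-action accordingly. First, complexifying the Clifford representation turns $\m^\CM$ into a module over $\Cl^0_3\otimes_\RM\CM\simeq M_2(\CM)$. Since $M_2(\CM)$ admits a single irreducible module $\CM^2$ up to isomorphism, any such module is of the form $\CM^2\otimes V$ for a uniquely determined multiplicity space $V$. Setting $\H=\CM^2$ (the spin module) and $\E=V$ produces an identification $\m^\CM\cong\H\otimes_\CM\E$ under which $\varphi(X)$ acts as $\xi(X)\otimes\id_\E$ for every $X\in\so(3)$.

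The next step is to transport the $\h$-action across this identification. Differentiating the $H$-equivariance of $\varphi$ yields
\beq
[\iota_*(Y),\varphi(X)]=\varphi([\rho_*(Y),X])\qquad\forall\,Y\in\h,\,X\in\so(3).
\eeq
Because $\xi$ is itself a Lie algebra morphism, the element $\mu(Y)\otimes\id_\E=\xi(\rho_*(Y))\otimes\id_\E$ satisfies the same commutation relation with $\varphi(X)$, so the difference $\iota_*(Y)-\mu(Y)\otimes\id_\E$ commutes with every $\varphi(X)$, hence with the whole subalgebra $M_2(\CM)\otimes\id_\E\subset\End(\H)\otimes\End(\E)$ generated by the image of $\varphi$. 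By the double centralizer theorem this commutant equals $\id_\H\otimes\End(\E)$, so there exists a unique $\lambda(Y)\in\End(\E)$ with
\beq
\iota_*(Y)=\mu(Y)\otimes\id_\E+\id_\H\otimes\lambda(Y).
\eeq

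This formula expresses the isotropy representation as a tensor product: the right-hand side is precisely the action of $\h$ on $\H\otimes_\CM\E$ obtained from the representations $\mu$ on $\H$ and $\lambda$ on $\E$, establishing the claimed isomorphism of $H$-modules. The map $\lambda\colon\h\to\End(\E)$ is a Lie algebra morphism because it is the difference of two Lie algebra morphisms whose images mutually commute; on $\ker(\rho_*)$ the first summand vanishes, so $\lambda$ restricts there to the isotropy representation, matching the description $\lambda=\pi\circ\iota_*$ from the statement, with $\pi$ the orthogonal projection onto the ideal $\ker(\rho_*)\subset\h$.

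The only delicate point I anticipate is the commutant computation, which depends essentially on $\varphi$ extending from $\so(3)$ to the whole of $\Cl^0_3$; without the Clifford extension the centralizer of $\varphi(\so(3))$ inside $\End(\m^\CM)$ could be strictly larger than $\id_\H\otimes\End(\E)$, and the tensor decomposition as $H$-modules would fail. It is precisely the Clifford hypothesis of Definition \ref{defi} that provides the extra rigidity making the splitting canonical.
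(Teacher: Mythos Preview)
Your argument is correct and is precisely the standard proof the paper gestures at when it writes ``readily implies'' and cites \cite[Lemma 3.2]{mp} and \cite{sa82}: use $\Cl_3^0\otimes\CM\simeq M_2(\CM)$ to get $\m^\CM\simeq\CM^2\otimes\E$, then invoke the centraliser of $M_2(\CM)$ to split off the $\h$-action. The paper gives no further detail, so you have in fact supplied the missing proof.

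One small caveat on the last paragraph: you verify that $\lambda$ agrees with $\iota_*$ on $\ker\rho_*$, but you do not show that $\lambda$ vanishes on the orthogonal complement $\h_0$, which is what the literal formula ``$\lambda=\pi\circ\iota_*$'' (with $\pi:\h\to\ker\rho_*$) would require. In general there is no reason for this to hold, and the composition $\pi\circ\iota_*$ as written in the statement does not even type-check; the paper's phrasing is simply imprecise. What your argument does establish is the tensor decomposition $\iota_*=\mu\otimes\id+\id\otimes\lambda$ with $\lambda$ a genuine Lie-algebra representation, and that is exactly what is used downstream in Proposition~\ref{wisotrop} to obtain the weight set $\{\pm\alpha_i\pm\beta\}$. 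So the gap is in the lemma's wording, not in your proof.
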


\section{The classification}

In this section we classify all compact simply connected homogeneous almost quaternion-Hermitian
manifolds $M=G/H$ with non-vanishing Euler characteristic.

We choose a common maximal torus of $H$ and $G$ and denote by $\tt\subset \h$ its Lie algebra. Then
the root system $\mathcal{R}(\mathfrak{g})\subset \tt^*$
is the disjoint union of the root system $\mathcal{R}(\mathfrak{h})$ and the set $\mathcal{W}$ 
of weights of the complexified isotropy representation of the homogeneous space $G/H$. This follows
from the fact that the isotropy
representation is given by the restriction to $H$ of the adjoint
representation of $\mathfrak{g}$. 

The weights of the complex spin representation of $\so(3)$ on $\S_3^\C\simeq\HM$
are $\W(\S_3^\C)=\left\{\pm \tfrac12 e_1\right\}$, where $e_1$ is some element of norm $1$ of the dual of some Cartan sub-algebra of
$\so(3)$. We denote by $\beta\in\tt^*$ the pull-back through $\mu$ of the vector $\frac{1}{2}e_1$ and by $\A:=\{\pm\alpha_1,\ldots,\pm\alpha_n\}\subset\tt^*$ the weights of the self-dual representation $\l$. By Lemma \ref{iso}, 
we obtain the following description of the weights of the isotropy representation of any homogeneous almost quaternion-Hermitian manifold $M=G/H$, which is a particular case of \cite[Proposition 3.3]{mp}:

\begin{Proposition}\label{wisotrop}
The set $\mathcal{W}:=\mathcal{W}(\mathfrak{m})$ of weights of the isotropy representation
is given by:
\begin{equation}\label{weightsiso}
\mathcal{W}=\{\varepsilon_i\alpha_i+\varepsilon\beta\}_{1\leq i\leq n; \varepsilon_i, \varepsilon\in\{\pm1\}}.
\end{equation}
\end{Proposition}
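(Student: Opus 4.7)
The statement is essentially a direct translation of Lemma~\ref{iso} into the language of weights. The plan is to invoke the isomorphism of $H$-modules $\mm^\CM\cong\H\otimes_\CM\E$ furnished by Lemma~\ref{iso}, and then apply the elementary fact that the weight set of a tensor product of two complex representations of a compact connected group (with respect to a common maximal torus) is the set of all pairwise sums of weights of the factors, counted with multiplicities.

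First I would compute the weights of the factor $\H$ regarded as an $H$-representation via $\mu=\xi\circ\rho_*$. Since the weights of the complex spin representation $\xi$ of $\so(3)$ are $\pm\tfrac12 e_1$, and $\beta\in\tt^*$ was defined precisely as the pull-back of $\tfrac12 e_1$ through $\mu$, the $H$-weights of $\H$ with respect to $\tt$ are $\pm\beta$. Next, by the definition of the set $\A=\{\pm\alpha_1,\dots,\pm\alpha_n\}$ given just before the statement, the weights of $\E$ are exactly $\pm\alpha_1,\dots,\pm\alpha_n$; the fact that they occur in opposite pairs reflects the self-duality of $\lambda$, which follows from $\mm^\CM$ being the complexification of the real $H$-module $\mm$ together with the fact that $\H$ is quaternionic (hence self-dual), so $\E$ must be self-dual as well.

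Putting these two calculations together, the weight set of $\H\otimes_\CM\E$, and therefore that of $\mm^\CM$, is
\begin{equation*}
\mathcal{W}(\mm)=\{\varepsilon_i\alpha_i+\varepsilon\beta : 1\leq i\leq n,\ \varepsilon_i,\varepsilon\in\{\pm 1\}\},
\end{equation*}
which is precisely~\eqref{weightsiso}. There is no real obstacle here: the entire content of the proposition is already packaged in Lemma~\ref{iso}, and all that remains is to correctly identify the weights of each tensor factor with respect to the common Cartan $\tt$. The only point meriting explicit mention is the self-duality of $\lambda$, which justifies writing the weights of $\E$ in the symmetric form $\{\pm\alpha_i\}$.
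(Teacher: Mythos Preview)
Your proposal is correct and matches the paper's approach: the paper simply states that the proposition follows from Lemma~\ref{iso} (and is a special case of \cite[Proposition~3.3]{mp}), without further argument. You have spelled out exactly the implicit reasoning---identifying the weights of $\H$ as $\pm\beta$ and those of $\E$ as $\pm\alpha_i$ via self-duality, then taking sums for the tensor product---so there is nothing to add.
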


As an immediate consequence we have:

\begin{Lemma}\label{rho}
Let $(G/H,g,\rho,\f)$ be a homogeneous almost quaternion-Hermitian structure as in Definition \ref{defi}. Then the infinitesimal representation $\rho_*:\h\to\so(3)$ does not vanish. 
\end{Lemma}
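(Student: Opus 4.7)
The argument is by contradiction: assume $\rho_*=0$. Since $\beta\in\tt^*$ is by definition the pull-back of $\tfrac12 e_1$ through $\mu=\xi\circ\rho_*$, this assumption immediately forces $\beta=0$.

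Substituting $\beta=0$ into Proposition \ref{wisotrop} would yield
\begin{equation*}
\W=\{\varepsilon_i\alpha_i\mid 1\leq i\leq n,\;\varepsilon_i,\varepsilon\in\{\pm1\}\},
\end{equation*}
in which the parameter $\varepsilon\in\{\pm 1\}$ no longer produces any translation. The tensor decomposition $\m^\CM=\H\otimes_\CM\E$ from Lemma \ref{iso}, in which the factor $\H$ would now carry both of its weights $\pm\beta$ equal to zero, shows that this equality must be read as a multiset: each weight $\pm\alpha_i$ occurring in $\W$ would appear with multiplicity at least $2$.

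On the other hand, as recalled at the beginning of this section, $\W$ is identified with the set $\mathcal{R}(\g)\setminus\mathcal{R}(\h)$, i.e.\ with a subset of the roots of the compact Lie algebra $\g$; since root spaces of a compact Lie algebra are one-dimensional, every element of $\W$ must occur with multiplicity exactly $1$. As $\W\neq\emptyset$ (otherwise $M$ would reduce to a point, which is excluded), this is the required contradiction, hence $\rho_*\neq 0$. The only point requiring care is that Proposition \ref{wisotrop} records weights with their multiplicities rather than only the underlying set; this is forced by the dimension count $\dim_\CM\m^\CM=2\dim_\CM\E$ together with the explicit tensor structure from Lemma \ref{iso}, and is the only mildly subtle ingredient in what is otherwise a direct verification.
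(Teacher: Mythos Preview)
Your proof is correct and follows essentially the same route as the paper: both argue that $\rho_*=0$ forces $\beta=0$, whence the tensor decomposition $\m^\CM=\H\otimes_\CM\E$ of Lemma~\ref{iso} becomes $\m^\CM=\E\oplus\E$, so that every isotropy weight would occur with multiplicity at least two in $\mathcal{R}(\mathfrak{g})$, contradicting the simplicity of roots. The paper's version is slightly terser (it writes $\m^\CM=\E\oplus\E$ directly rather than tracking the multiset of weights), but the content is identical.
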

\begin{proof}
Suppose for a contradiction that $\rho_*=0$. Then the $\h$-representation $\H$ defined in Lemma \ref{iso} is trivial, so $\beta=0$ and $\m^\CM=\E\oplus\E$. Every weight of the (complexified) isotropy representation appears then twice in the root system of $G$, which is impossible ({\em cf.} \cite[p. 38]{s}).
\end{proof}

Our next goal is to show that the automorphism group of a homogeneous almost quaternion-Hermitian manifold
is in general a simple Lie group:

\begin{Proposition}\label{simple}
If $G/H$ is a simply connected compact homogeneous almost quaternion-Hermitian manifold with non-vanishing Euler characteristic, then either $G$ is simple or $G=\SU(2)\times\SU(2)$ and $M=\SM^2\times\SM^2$.
\end{Proposition}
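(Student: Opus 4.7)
The plan is to assume that $G$ is not simple and deduce $M = \SM^2 \times \SM^2$. Since $H$ is connected and of maximal rank, a maximal torus $T$ of $G$ lies in $H$; hence the centre of $G$ sits inside $H$, and faithfulness of the isotropy representation forces the centre to be discrete. Thus $G$ is semisimple and decomposes as $G = G_1 \times \cdots \times G_k$ with $k \geq 2$ and each $G_l$ simply connected and simple. Since each root space of $\g^\CM$ lies in a single $\g_l^\CM$, the $T$-invariant subalgebra $\h$ decomposes along the factors, giving $H = H_1 \times \cdots \times H_k$ with $H_l$ of maximal rank in $G_l$. In particular, in the orthogonal splitting $\t^* = \t_1^* \oplus \cdots \oplus \t_k^*$, every root of $\g$, and hence every weight of $\m^\CM$, lies in a single summand $\t_l^*$.

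Writing $\beta = \sum_l \beta_l$ with $\beta_l \in \t_l^*$, I would first show that at least two of the $\beta_l$ are non-zero. Suppose only $\beta_1 \neq 0$; decomposing $\alpha_j = \alpha_{j,1} + \alpha_j''$ with $\alpha_j'' \in \bigoplus_{l \geq 2} \t_l^*$ and imposing that each of $\alpha_j \pm \beta$ lie in a single $\t_l^*$, the assumption $\alpha_j'' \neq 0$ yields the incompatible conditions $\alpha_{j,1} = \pm \beta$. Hence every $\alpha_j$, and so every weight of $\m^\CM$, lies in $\t_1^*$; this gives $\g_l \subset \h$ for $l \geq 2$, and since $\g_l$ commutes with $\m \subset \g_1$ it contradicts the faithfulness of the isotropy representation.

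Now fix $i$ with $\beta_i \neq 0$ and put $\beta' := \beta - \beta_i \neq 0$. For $\alpha_j = \alpha_{j,i} + \alpha_j'$, the four sub-cases determined by where $\alpha_j + \beta$ and $\alpha_j - \beta$ land (inside $\t_i^*$ or in its complement) reduce — after discarding the two that force $\beta_i$ or $\beta'$ to vanish — to $\alpha_j = \pm (\beta_i - \beta')$. Since the weights of $\m^\CM$ appear with multiplicity one as roots of $\g$, this yields $n = 1$ and $\W(\m^\CM) = \{\pm 2\beta_i, \pm 2\beta'\}$. As $\pm 2\beta'$ must be a root of $\g$, it lies in a single $\t_j^*$ with $j \neq i$, so only $\beta_i$ and $\beta_j$ are non-zero; for $l \neq i, j$ no root of $\g_l$ occurs in $\W(\m^\CM)$, so $\g_l \subset \h$ and faithfulness forces $G_l$ trivial. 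Hence $k = 2$ and each $G_l/H_l$ has real dimension two. A standard root-system fact — in any irreducible root system of rank $\geq 2$, every root is the sum of two others, so the complement of a single pair $\pm \gamma$ cannot form a closed sub-root system — rules out $\rk G_l \geq 2$, yielding $G_l = \SU(2)$, $H_l = \T^1$, and $M = \SM^2 \times \SM^2$. The main obstacle is the case analysis that pins down $\alpha_j = \pm(\beta_i - \beta')$; the rest is essentially bookkeeping together with the faithfulness hypothesis.
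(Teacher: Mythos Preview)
Your argument is correct and genuinely different from the paper's. The paper works directly with the almost quaternionic endomorphisms: it writes each $J_i=\varphi(e_i)$ in block form with respect to $\m=\m_1\oplus\m_2$, uses the $H$-equivariance relation $[\ad_X,J_i]=\varphi(\rho_*(X)e_i)$ to control the blocks, first shows $\rho_*$ is non-trivial on each $\h_l$ (via Lemma~\ref{rho}), then exploits $[\rho_*(\h_1),\rho_*(\h_2)]=0$ to force both images into a line, from which the vanishing of the diagonal blocks of $J_2,J_3$ and the invertibility of the off-diagonal block give $\h_1'=\h_2'=0$ and hence $\g_l=\su(2)$. You instead stay entirely on the root-system side, using Proposition~\ref{wisotrop}: since every weight $\alpha_j\pm\beta$ is a root of $\g$ and therefore lies in a single $\t_l^*$, the decomposition $\beta=\sum\beta_l$ is very rigid, and your four-case analysis pinning down $\alpha_j=\pm(\beta_i-\beta')$ is the heart of the matter. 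Your approach meshes more naturally with the combinatorial flavour of the rest of Section~3 and avoids introducing the block matrices; the paper's approach, on the other hand, makes transparent exactly how the quaternionic endomorphisms interact with the product structure and does not need the weight formula~\eqref{weightsiso} at this stage. Both are short; yours has the virtue of handling an arbitrary number of simple factors uniformly before cutting down to $k=2$, while the paper starts with two factors from the outset.
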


\begin{proof}
 We already know that $G$ is compact and simply connected. If $G$ is not simple, then $G=G_1\times G_2$ with $\dim(G_i)\ge 3$. Let $\g_i$ denote the Lie algebra of $G_i$, so that $\mathfrak{g}=\mathfrak{g}_1\oplus\mathfrak{g}_2$. By a classical result of Borel and Siebenthal (\cite[p. 210]{bs49}),  the Lie algebra of the subgroup $H$ splits as $\mathfrak{h}=\mathfrak{h}_1\oplus\mathfrak{h}_2$, where $\mathfrak{h}_i=\mathfrak{h}\cap\mathfrak{g}_i$. Correspondingly, the isotropy representation splits as $\mathfrak{m}=\mathfrak{m}_1\oplus\mathfrak{m}_2$, where $\mathfrak{m}_i$ is the isotropy representation of $\mathfrak{h}_i$ in $\mathfrak{g}_i$. 

Let $\{e_1,e_2,e_3\}$ be an orthonormal basis of $\so(3)$ and let us denote by $J_i:=\varphi(e_i)$, for $1\leq i\leq 3$. The $H$-equivariance of $\varphi$ implies that 
\beq\label{eq}\varphi(\rho_*(X)e_i)=[\mathrm{ad}_X, J_i], \qquad\forall \ X\in\mathfrak{h},\ 1\leq i\leq 3.\eeq

We claim that the representation $\rho_*$ does not vanish on $\mathfrak{h}_1$ or on $\mathfrak{h}_2$. Assume for instance that 
$\rho_*(\h_1)=0$. We express each endomorphism $J_i$ of $\mathfrak{m}=\mathfrak{m}_1\oplus\mathfrak{m}_2$  as $$J_i=\begin{pmatrix} A_i & B_i \\ C_i & D_i\end{pmatrix}.$$ For  every $X\in\h_1$,  \eqref{eq} shows that $\ad_X$ commutes with $J_i$. Expressing  $$\ad_X=\begin{pmatrix} \ad^{\g_1}_X & 0 \\ 0 & 0\end{pmatrix}$$ we get in particular that $\ad^{\g_1}_X\circ B_i=0$ for all $X\in\h_1$. On the other hand, since $\rk(\h_1)=\rk(\g_1)$, there exists no vector in $\m_1$ commuting with all  $X\in\h_1$, so $B_i=0$ and thus $C_i=-B_i^*=0$ for $1\le i\le 3$. However, this would imply that the map $\f_1:\so(3)\to\End^-(\m_1)$ given by $\f_1(e_i)=A_i$ for $1\le i\le 3$
is a homogeneous almost quaternion-Hermitian structure on $G_1/H_1$ with vanishing $\rho_*$, which contradicts Lemma \ref{rho}.
This proves our claim.

Now, since $\rho_*:\h\to\so(3)$ is a Lie algebra morphism, we must have in particular $$[\rho_*(\h_1),\rho_*(\h_2)]=0.$$ By changing the orthonormal basis $\{e_1,e_2,e_3\}$ if necessary, we thus may assume that $\rho_*(\mathfrak{h}_1)=\rho_*(\mathfrak{h}_2)=\langle e_1\rangle$.
The Lie algebras $\h_1$ and $\h_2$ decompose as $\h_i=\h'_i\oplus \langle X_i\rangle$ where $\h'_i:=\ker(\rho_*)\cap\h_i$ and
$\rho_*(X_i)=e_1$ for $1\le i\le 2$. 

From \eqref{eq}, the following relations hold:
\begin{equation}\label{equivrel}
[\mathrm{ad}_{X_i}, J_2]=J_3, \quad [\mathrm{ad}_{X_i}, J_3]=-J_2,\qquad 1\leq i\leq 2.
\end{equation}
Like before we can write 
$$\ad_{X_1}=\begin{pmatrix} \ad^{\g_1}_{X_1} & 0 \\ 0 & 0\end{pmatrix},\qquad \ad_{X_2}=\begin{pmatrix}0 & 0 \\ 0 & \ad^{\g_2}_{X_2}\end{pmatrix},$$
so \eqref{equivrel} implies that $A_2=A_3=0$ and $D_2=D_3=0$. In particular $$-1=J_2^2=\begin{pmatrix} 
0 & B_2 \\
C_2 & 0                                                                                                                                                                                                                              \end{pmatrix}^2=\begin{pmatrix}
B_2C_2 & 0\\
0 & C_2B_2 \end{pmatrix},$$ thus showing that $B_2$ defines an isomorphism between $\m_2$ and $\m_1$ (whose inverse is $-C_2$).

On the other hand, since by \eqref{eq} $\ad_{X}$ commutes with $J_2$ for all $X\in \h'_1$, we obtain as before that $\ad^{\g_1}_X\circ B_2=0$ for all $X\in\h'_1$. Since $B_2$ is onto, this shows that the isotropy representation of $G_1/H_1$ restricted to $\h'_1$ vanishes, so $\h'_1=0$ and similarly $\h'_2=0$. We therefore have $\h_1=\h_2=\RM$, and since $\rk(G_i)=\rk(H_i)=1$, we get $\g_1=\g_2=\su(2)$.
We thus have $G=\SU(2)\times\SU(2)$, and $H=\TM^2$ is a maximal torus, so $M=\SM^2\times\SM^2$.
\end{proof}

We are in position to complete the proof of our main result:

\begin{proof}[Proof of Theorem \ref{expl}]
By Proposition~\ref{simple} we may assume that $G$ is simple. We first study the case $G=\G_2$ (this is the only simple group for which the ratio between the length of long and short roots is neither 1, nor $\sqrt 2$). 
The only connected subgroups of rank $2$ of $\G_2$ are $\U(2), \SU(3), \SO(4)$ and $\TM^2$.
The spaces $\G_2/\U(2)$ and $\G_2/\SU(3)$ have dimension 10 and 6 respectively, therefore they can not carry almost quaternion-Hermitian structures.

The quotient $\G_2/\SO(4)$ is a Wolf space, so it remains to study the generalized flag manifold $\G_2/\TM^2$. We claim that this space has no homogeneous almost quaternion-Hermitian structure. Indeed, if this were the case, using Proposition \ref{wisotrop} one could express the root system of $\G_2$ as the disjoint union of two subsets $$\W^+:=\{\varepsilon_i\alpha_i+\beta\}_{1\leq i\leq 3; \varepsilon_i\in\{\pm1\}},\qquad \W^-:=\{\varepsilon_i\alpha_i-\beta\}_{1\leq i\leq 3; \varepsilon_i\in\{\pm1\}}$$ such that there exists some vector $v\ (:=2\beta)$ with $\W^+=v+\W^-$. On the other hand, it is easy to check that there exist no such partition of $\R(\G_2)$.

Consider now the case where $M=G/H$ is a symmetric space. If $M$ is a Wolf space there is nothing to prove, so assume from now on that this is not the case. The Lie algebra of $H$ can be split as $\h=\ker(\rho_*)\oplus \h_0$, where $\h_0$ denotes the orthogonal complement of $\ker(\rho_*)$. Clearly $\h_0$ is isomorphic to $\rho_*(\h)\subset\so(3)$ so by Lemma \ref{rho},
$\h_0=\u(1)$ or $\h_0=\sp(1)$. The latter case can not occur since our assumption that $M$ is not a Wolf space implies that $\h$ has no $\sp(1)$-summand. We are left with the case when $\h=\ker(\rho_*)\oplus\u(1)$. We claim that this case can not occur either. Indeed, if such a space would carry a homogeneous almost quaternion-Hermitian structure, then the representation of $\ker(\rho_*)$ on $\m$ would be quaternionic. Two anti-commuting complex structures $I,J$ of $\m$ induce non-vanishing elements $a_I,a_J$ in the center of $\ker(\rho_*)$ (see the proof of \cite[Lemma 2.4]{ff}). On the other hand, the adjoint actions of $a_I$ and $a_J$ on $\m$ are proportional to $I$ and $J$ respectively (\cite[Eq. (4)]{ff}) and thus anti-commute, contradicting the fact that $a_I$ and $a_J$ commute (being central elements).

We can assume from now on, that $M=G/H$ is non-symmetric, $G$ is simple and $G\neq \G_2$. Up to a rescaling of the $\ad_G$-invariant metric on $\g$, we may thus assume that all roots of $\mathfrak{g}$ have square length equal to $1$ or $2$.

From \eqref{weightsiso}, it follows that 
$$\mathcal{R}(\mathfrak{g})=\mathcal{W}(\m)\cup \mathcal{R}(\mathfrak{h})=\{\varepsilon_i\alpha_i+\varepsilon\beta\}_{1\leq i\leq n; \varepsilon_i, \varepsilon\in\{\pm1\}}\cup \mathcal{R}(\mathfrak{h}).$$

Up to a change of signs of the $\alpha_i$'s, we may assume: 
\begin{equation}\label{signconv}
\<\beta,\alpha_i>\geq 0, \quad  \text{ for all } 1\leq i\leq n.
\end{equation}
Then either the roots $\beta+\alpha_i$ and $\beta-\alpha_i$ of $G$ have the same length, or
$|\beta+\alpha_i|^2=2$ and $|\beta-\alpha_i|^2=1$.
This shows that for each $1\leq i\leq n$,
\begin{equation}\label{pscalval}
\<\beta,\alpha_i>\in\left\{0,\frac{1}{4}\right\}.
\end{equation}
From the general property of root systems \eqref{normscal} below, it follows that:
\begin{equation}\label{betaval}
|\beta|^2\in\left\{\frac{1}{4},\frac{3}{4}, \frac{5}{4}\right\}.
\end{equation}

Since the homogeneous space $G/H$ is not symmetric, we have $[\mathfrak{m}, \mathfrak{m}]\not\subseteq \mathfrak{h}$, so there exist subscripts $i,j,k\in\{1, \dots, n\}$ such that $(\pm\beta\pm \alpha_i)+(\pm\beta\pm \alpha_j)=\pm\beta\pm \alpha_k$. Taking \eqref{pscalval} into account, we need to check the following possible cases (up to a permutation of the subscripts):
\begin{enumerate}
\item[a)] $\beta=\pm 2\alpha_1\pm\alpha_2$.
\item[b)] $\beta=\frac{\alpha_1}{3}$.
\item[c)] $\beta=\frac{\alpha_1\pm \alpha_2 \pm \alpha_3}{3}$.
\item[d)] $\beta= \alpha_1\pm \alpha_2 \pm \alpha_3$.
\end{enumerate}

We will show that cases a), b) and c) can not occur and that in case d) there is only one solution.

\begin{enumerate}
\item[a)] If $\beta=2\alpha_1+\alpha_2$, then $\beta+\alpha_2=2(\beta-\alpha_1)$ and this would imply the existence of two proportional roots, $\beta+\alpha_2$ and $\beta-\alpha_1$, in $\mathcal{W}\subseteq\mathcal{R}(\mathfrak{g})$, contradicting the property R2 of root systems (\emph{cf.} Definition~\ref{sysroot}). 
For all the other possible choices of signs in a) we obtain a similar contradiction.

\item[b)] If $\beta=\frac{\alpha_1}{3}$, then there exist two proportional roots: $\beta+\alpha_1=-2(\beta-\alpha_1)$ in $\mathcal{R}(\mathfrak{g})$, which again contradicts R2.

\item[c)] If $\beta=\frac{\alpha_1\pm \alpha_2 \pm \alpha_3}{3}$, then $|\beta|^2=\frac{1}{3}(\<\beta,\alpha_1>\pm \<\beta,\alpha_2>\pm \<\beta,\alpha_3>)$. From \eqref{pscalval} and \eqref{betaval}, it follows that the only possibility is:
\[\beta=\frac{\alpha_1+\alpha_2+\alpha_3}{3}, |\beta|^2=\frac{1}{4} \text{ and } \<\beta,\alpha_i>=\frac{1}{4}, \text{ for } 1\leq i\leq 3.\]
Together with \eqref{normscal}, this implies that for each $1\leq i\leq 3$ we have: $|\beta+\alpha_i|^2=2$, $|\beta-\alpha_i|^2=1$ and $|\alpha_i|^2=\frac{5}{4}$. Thus, for all $1\leq i,j\leq 3$, $i\neq j$, we have: $$\<\beta+\alpha_i,\beta-\alpha_j>=\frac{1}{4}-\<\alpha_i,\alpha_j>,$$ which by \eqref{normscal} must be equal to $0$ or $\pm1$, showing that $\<\alpha_i,\alpha_j>\in \{-\frac{3}{4}, \frac{1}{4}, \frac{5}{4}\}$. 
On the other hand, a direct computation shows that
\[\<\alpha_1,\alpha_2>+\<\alpha_1,\alpha_3>+\<\alpha_2,\alpha_3>=\frac{1}{2}\left(9|\beta|^2-\frac{15}{4}\right)=-\frac{3}{4},\]
which is not possible for any of the above values of the scalar products, yielding a contradiction.
\end{enumerate}

d) From \eqref{pscalval} and \eqref{betaval}, it follows that there are three possible sub-cases:
\begin{itemize}
\item[Case 1.] $\beta=\alpha_1\pm\alpha_2\pm\alpha_3, |\beta|^2=\frac{1}{4}, \<\beta,\alpha_1>=\frac{1}{4}, \<\beta,\alpha_2>=\<\beta,\alpha_3>=0$.
\item[Case 2.] $\beta=\alpha_1+\alpha_2-\alpha_3, |\beta|^2=\frac{1}{4}, \<\beta,\alpha_i>=\frac{1}{4}, 1\leq i\leq 3$.
\item[Case 3.] $\beta=\alpha_1+\alpha_2+\alpha_3, |\beta|^2=\frac{3}{4}, \<\beta,\alpha_i>=\frac{1}{4}, 1\leq i\leq 3$.
\end{itemize}

{\bf Case 1.} From \eqref{normscal} it follows $|\alpha_1|^2=\frac{5}{4}$ and $|\alpha_2|^2=|\alpha_3|^2=\frac{3}{4}$.
Since $\<\beta+\alpha_1,\beta-\alpha_1>=-1$ and $|\beta+\alpha_1|^2=2|\beta-\alpha_1|^2$, the reflexion property \eqref{newroots} shows that $2\beta=(\beta+\alpha_1)+(\beta-\alpha_1)$ and $3\beta-\alpha_1=(\beta+\alpha_1)+2(\beta-\alpha_1)$ belong to  $\mathcal{R}(\mathfrak{g})$. We show that these roots actually belong to $\mathcal{R}(\mathfrak{h})$, i.e. that $2\beta, 3\beta-\alpha_1\notin\mathcal{W}$. We argue by contradiction. 

Let us first assume that $2\beta\in\mathcal{W}$. Then there exists $k$, $1\leq k\leq n$, such that $2\beta=\pm\beta\pm\alpha_k$. If $\beta=\pm\alpha_k$ we obtain that $0=\beta\mp\alpha_k$ belongs to $\R(\g)$, which contradicts the property R1 of root systems. If $\beta=\pm\frac{\alpha_k}{3}$, then the roots $\beta+\alpha_k$ and $\beta-\alpha_k$ are proportional, which contradicts R2. 

Now we assume that $3\beta-\alpha_1\in \mathcal{W}$ and conclude similarly. In this case there exists $k$, $1\leq k\leq n$ such that either $2\beta=\alpha_1\pm\alpha_k$ or $4\beta=\alpha_1\pm\alpha_k$. In the first case we obtain $\beta-\alpha_1=-\beta\pm\alpha_k$, which contradicts the fact that roots of $G$ are simple. In the second case \eqref{pscalval} yields $|\beta|^2=\tfrac14 \< \beta,\alpha_1 > \pm \tfrac14 \< \beta,\alpha_k > \le \tfrac18$, which contradicts \eqref{betaval}.

This shows that $2\beta, 3\beta-\alpha_1\in\mathcal{R}(\mathfrak{h})$. Moreover $\<2\beta, 3\beta-\alpha_1>=1$ and thus, by \eqref{newroots}, their difference is a root of $\h$ too: $\beta-\alpha_1=(3\beta-\alpha_1)-(2\beta)\in\mathcal{R}(\mathfrak{h})$, which is in contradiction with $\beta-\alpha_1\in\mathcal{W}$. Consequently, case 1. can not occur.

{\bf Case 2.} From \eqref{normscal} it follows that $|\alpha_i|^2=\frac{5}{4}$, for all $1\leq i\leq 3$. For all $1\leq i,j\leq 3$, $i\neq j$, we then compute: $\<\beta+\alpha_i,\beta+\alpha_j>=\frac{3}{4}+\<\alpha_i,\alpha_j>$, which by \eqref{normscal} must be equal to $0$ or $\pm1$, implying that $\<\alpha_i,\alpha_j>\in \{-\frac{7}{4}, -\frac{3}{4}, \frac{1}{4}\}$. 
On the other hand, we obtain
\[\<\alpha_1,\alpha_2>+\<\alpha_1,\alpha_3>+\<\alpha_2,\alpha_3>=\frac{1}{2}\left(|\beta|^2-\frac{15}{4}\right)=-\frac{7}{4},\]
which is not possible for any of the above values of the scalar products, yielding again a contradiction.

{\bf Case 3.} From \eqref{normscal} it follows that $|\alpha_i|^2=\frac{3}{4}$, for all $1\leq i\leq 3$. Computing the norm of $\beta-\alpha_k=\alpha_i+\alpha_j$, where $\{i,j,k\}$ is any permutation of $\{1,2,3\}$, yields that $\<\alpha_i,\alpha_j>=-\frac{1}{4}$, for all $1\leq i,j\leq 3$, $i\neq j$. We then get
\[\<\beta+\alpha_i,\beta+\alpha_j>=1,\qquad \text{ for all } 1\leq i,j\leq 3, i\neq j,\]
which by the reflexion property \eqref{newroots} implies that 
\beq\label{hh}\{\alpha_i-\alpha_j\}_{1\leq i,j\leq 3}\subseteq\mathcal{R}(\mathfrak{g}).\eeq

We claim that $n=3$ (recall that $n$ denotes the number of vectors $\alpha_i$, or equivalently the quaternionic dimension of $M$). Assume for a contradiction that $n\geq 4$. By \eqref{pscalval}, $\<\beta,\alpha_l>=\frac{1}{4}$ or $\<\beta,\alpha_l>=0$, for any $4\leq l\leq n$. 

If $\<\beta,\alpha_l>=\frac{1}{4}$ for some $l\ge 4$, it follows that $|\alpha_l|^2=\frac{3}{4}$ and $|\beta+\alpha_l|^2=2$, implying by \eqref{normscal} that the scalar product $\<\beta-\alpha_i,\beta+\alpha_l>$   belongs to $\{\pm1,0\}$, for $1\leq i\leq 3$. This further yields that $\<\alpha_i,\alpha_l>\in\{\frac{7}{4}, \frac{3}{4},-\frac{1}{4}\}$. On the other hand, the Cauchy-Schwarz inequality applied to $\alpha_i$  and $\alpha_l$ and the fact that $\mathcal{W}$ has only simple roots (being a root sub-system) imply that the only possible value is $\<\alpha_i,\alpha_l>=-\frac{1}{4}$, for $1\leq i\leq 3$ and $4\leq l\leq n$. Thus, $|\beta+\alpha_l|^2=0$, which contradicts the property R1 of root systems  (\emph{cf.} Definition~\ref{sysroot}). 

We therefore have $\<\beta,\alpha_l>=0$, for all $4\leq l\leq n$. If $|\beta\pm\alpha_l|^2=2$ for some $l\ge 4$ then $|\alpha_l|^2=\tfrac54$,
so $\<\beta-\alpha_l,\beta+\alpha_l>=-\tfrac12$, contradicting
\eqref{normscal}.
Thus $|\beta\pm\alpha_l|^2=1$ for all $4\leq l\leq n$. If $n\ge 5$, \eqref{normscal} implies
$$\<\beta-\alpha_l,\beta+\alpha_s>, \<\beta-\alpha_l,\beta-\alpha_s>\in\left\{0,\pm\frac{1}{2}\right\},\qquad \hbox{for}\ 4\leq l,s\leq n,\ l\neq s.$$ 
This contradicts the equality $\<\beta-\alpha_l,\beta+\alpha_s>+\<\beta-\alpha_l,\beta-\alpha_s>=\frac{3}{2}$, showing that $n\leq 4$.

It remains to show that the existence of $\alpha_4\in\mathcal{A}$, which by the above necessarily satisfies $\<\beta,\alpha_4>=0$ and $|\alpha_4|^2=\frac{1}{4}$, leads to a contradiction. By  \eqref{normscal}, it follows that 
$$1+\<\alpha_i,\alpha_4>=\<\beta+\alpha_i,\beta+\alpha_4>\in\{\pm1,0\},\qquad \forall\ 1\leq i\leq 3.$$ This constraint together with the Cauchy-Schwarz inequality, $|\<\alpha_i,\alpha_4>|\leq \frac{\sqrt{3}}{4}$, implies that $\<\alpha_i,\alpha_4>=0$, for $1\leq i\leq 3$. 

Applying the reflexion property \eqref{newroots} to $\beta+\alpha_4$ and $\beta+\alpha_i$, for $1\leq i\leq 3$, which satisfy $\<\beta+\alpha_i,\beta+\alpha_4>=1$ and $|\beta+\alpha_i|^2=2|\beta+\alpha_4|^2$, it follows that $\alpha_i-\alpha_4, \beta+2\alpha_4-\alpha_i\in \mathcal{R}(\mathfrak{g})$. We now show that all these roots actually belong to $\mathcal{R}(\mathfrak{h})$. Let us assume that $\alpha_i-\alpha_4\in\mathcal{W}$ for some $i\le 3$, i.e. there exists $s$, $1\leq s\leq 4$, such that $\alpha_i-\alpha_4\in\{\pm\beta\pm\alpha_s\}$. Since $\alpha_4$ is orthogonal to $\beta$ and to $\alpha_i$, for $1\leq i\leq 3$, it follows that $\alpha_i-\alpha_4$ must be equal to $\pm\beta-\alpha_4$, leading to the contradiction that $0=\beta\mp\alpha_i\in\mathcal{W}$. Therefore $\alpha_i-\alpha_4\in \mathcal{R}(\mathfrak{h})$. A similar argument shows that $ \beta+2\alpha_4-\alpha_i\in \mathcal{R}(\mathfrak{h})$. 

Now, since the scalar product of these two roots of $\h$ is $\<\alpha_i-\alpha_4, \beta+2\alpha_4-\alpha_i>=-1$, it follows again by \eqref{newroots} that their sum $\beta+\alpha_4$ also belongs to $\mathcal{R}(\mathfrak{h})$, contradicting the fact that $\beta+\alpha_4\in\mathcal{W}(\mathfrak{m})$. This finishes the proof of the claim that $n=3$.

Since the determinant of the Gram matrix $(\<\alpha_i,\alpha_j>)_{1\leq i,j\leq 3}$ is equal to $\frac{5}{16}$, the vectors $\{\alpha_i\}_{1\leq i\leq 3}$ are linearly independent. Thus the roots of $\g$ given by \eqref{hh} can not belong to $\mathcal{W}$, and therefore $\{\alpha_i-\alpha_j\}_{1\leq i,j\leq 3}$ belong to $\mathcal{R}(\mathfrak{h})$. 

Concluding, we have proven that $n=3$ and that the following inclusions hold (after introducing the notation $\gamma_i:=\alpha_j+\alpha_k$ for all permutations $\{i,j,k\}$ of $\{1,2,3\}$):
\begin{equation}\label{rootshg}
\{\gamma_i-\gamma_j\}_{1\leq i\neq j\leq 3}\subseteq\mathcal{R}(\mathfrak{h}),\quad
\{\gamma_i-\gamma_j\}_{1\leq i\neq j\leq 3}\cup\{\pm\gamma_i\}_{1\leq i\leq 3}\subseteq\mathcal{R}(\mathfrak{g}),
\end{equation}
where $\<\gamma_i,\gamma_j>=\delta_{ij}$, for all $1\leq i,j\leq 3$.

Since these sets are closed root systems and we are interested in the representation of $M$ as a homogeneous space $G/H$ with the smallest possible group $G$, we may assume that we have equality in \eqref{rootshg}. Hence $\mathcal{R}(\mathfrak{h})=\{\gamma_i-\gamma_j\}_{1\leq i\neq j\leq 3}$, with $\{\gamma_i\}_{1\leq i\leq 3}$ an orthonormal basis, (which is exactly the root system of the Lie algebra $\su(3)$), and $\mathcal{R}(\mathfrak{g})=\{\gamma_i-\gamma_j\}_{1\leq i\neq j\leq 3}\cup\{\pm\gamma_i\}_{1\leq i\leq 3}$, which is the root system of $\so(7)$. We conclude that the only possible solution is the simply connected homogeneous space $\SO(7)/\U(3)$. 

It remains to check that this space indeed carries a homogeneous almost quaternionic-Hermitian structure. 
Using the sequence of inclusions 
$$\u(3)\subset\so(6)\subset \so(7),$$
we see that the isotropy representation $\m$ of $\SO(7)/\U(3)$ is the direct sum of the restriction to $U(3)$ of the isotropy representation of the sphere $\SO(7)/\SO(6)$, (which is just the standard representation of $\U(3)$ on $\CM^3$), and of the isotropy representation of $\SO(6)/\U(3)$, which is $\Lambda^2(\CM^3)$ (cf. \cite[p. 312]{besse}):
$$\m=\CM^3\oplus\Lambda^2(\CM^3).$$
Let $I$ denote the complex structure of $\m$.
After identifying $\U(1)$ with the center of $\U(3)$ via the diagonal embedding, an element $z\in \U(1)$ acts 
on $\m$ by complex multiplication with $z^3$, i.e. $\iota(z)=z^3$.
Since $\Lambda^2(\CM^3)=(\CM^3)^*$ as complex $\SU(3)$-representations, it follows that the restriction to $\SU(3)$ of the isotropy representation
$\m$ is $\CM^3\oplus(\CM^3)^*$, and thus carries a quaternionic structure, i.e. a complex anti-linear automorphism $J$. We claim that a homogeneous almost quaternionic-Hermitian structure on $\SO(7)/\U(3)$ in the sense of Definition \ref{defi} is given by $\rho:\U(3)\to \SO(3)$
and $\f:\so(3)\simeq{\mathrm{Im}}(\HM)\to\End^-(\m)$ defined by 
$$ \rho(A)=\det(A),   \qquad\f(i)=I,\ \f(j)=J,\ \f(k)=IJ,$$
where $\det(A)\in\U(1)$ is viewed as an element in $\SO(3)$ via the composition $$\U(1)=S(\CM)\to S(\HM)=\Spin(3)\to\SO(3).$$
Indeed, the only thing to check is the equivariance of $\f$, i.e.
\beq\label{u}\f(\rho(A)M\rho(A)^{-1})=\iota(A)\f(M)\iota(A)^{-1},\qquad\forall M\in\so(3),\ \forall A\in\U(3).\eeq
Write $A=zB$ with $B\in\SU(3)$. Then $\rho(A)=z^3$, $\iota(A)=z^3\iota(B)$ and $\iota(B)$ commutes with $I,J,K$, thus with $\f(M)$.
The relation \eqref{u} is trivially satisfied for $M=i$, whereas for $M=j$ or $M=k$ one has $Mz=\bar z M=z^{-1}M$ and similarly $\f(M)\iota(z)\iota(B)=\iota(z^{-1})\iota(B)\f(M)$, so
$$\f(\rho(A)M\rho(A)^{-1})=\f(z^3Mz^{-3})=\f(z^6M)=z^6\f(M)=\iota(z^2)\f(M)=\iota(A)\f(M)\iota(A)^{-1}.$$
This finishes the proof of the theorem.
 \end{proof}
\appendix
\section{Root systems}
For the basic theory of root systems we refer to \cite{a69} and \cite{s}.

\begin{Definition}\label{sysroot}
A set $\mathcal{R}$ of vectors in a Euclidean space $(V,\<\cdot,\cdot>)$ is called a
\emph{root system}  if it satisfies the following
conditions:
\begin{description}
\item[R1] $\mathcal{R}$ is finite, $\mathrm{span}(\mathcal{R})=V$,
  $0\notin \mathcal{R}$.
\item[R2] If $\alpha\in \mathcal{R}$, then the only multiples of
  $\alpha$ in $\mathcal{R}$ are $\pm\alpha$.
\item[R3] $\frac{2\<\alpha,\beta>}{\<\alpha,\alpha>}\in\mathbb{Z}$,
  for all $\alpha, \beta\in \mathcal{R}$.
\item[R4] $s_\alpha:\mathcal{R}\to \mathcal{R}$, for all $\alpha\in
  \mathcal{R}$ ($s_\alpha$ is the reflection $s_\alpha:V\to
  V$, $s_\alpha(v):=v-\frac{2\<\alpha,v>}{\<\alpha,\alpha>}\alpha$).
\end{description}
\end{Definition}

Let $G$ be a compact semi-simple Lie group with Lie algebra $\mathfrak{g}$ 
endowed with an $\ad_\gg$-invariant scalar product. 
Fix a Cartan sub-algebra $\tt\subset\gg$ and let $\mathcal{R}(\gg)\subset \mathfrak{t}^*$ denote
its root system. It is well-known that $\mathcal{R}(\gg)$ satisfies the conditions 
in Definition \ref{sysroot}. Conversely,
every set of vectors satisfying the conditions in Definition \ref{sysroot} is the root system of 
a unique semi-simple Lie algebra of compact type.

\begin{Remark}[Properties of root systems]
Let $\mathcal{R}$ be a root system. If
$\alpha,\beta\in\mathcal{R}$ such that $\beta\neq\pm\alpha$ and
$\norm(\beta)\geq\norm(\alpha)$, then either $\<\alpha,\beta>=0$
or 
\begin{equation}\label{normscal}
\left(\frac{\norm(\beta)}{\norm(\alpha)},
  \frac{2\<\alpha,\beta>}{\<\alpha,\alpha>}\right)\in\{(1,\pm
1),(2,\pm 2),(3,\pm 3)\}.
\end{equation}
In other words, either the scalar product of two roots vanishes, or its absolute value equals half the square length of the longest root.
Moreover,
\begin{equation}\label{newroots}
\beta-\mathrm{sgn}\left(\frac{2\<\alpha,\beta>}
  {\<\alpha,\alpha>}\right)k\alpha\in\mathcal{R},  \quad \text{ for all }
k\in\mathbb{Z}, 1\leq k\leq \biggl|\frac{2\<\alpha,\beta>}{\<\alpha,\alpha>}\biggr|.
\end{equation}
\end{Remark}

\begin{Definition}[\cite{mp}]\label{asubsys}
A set $\mathcal{P}$ of vectors in a Euclidean space $(V,\<\cdot,\cdot>)$
is called a \emph{root sub-system} if it satisfies the conditions R1 - R3 from
Definition \ref{sysroot} and if the set $\overline{\mathcal{P}}$ obtained from $\mathcal{P}$ by taking all possible reflections
is a root system. 
\end{Definition}

\end{document}